\newtheorem{theorem}{Theorem}[section]
\theoremstyle{definition}
\theoremstyle{remark}
\numberwithin{equation}{section}
\newcommand{\R}{\mathbb{R}}
\def\Int{\displaystyle\int}
\begin{document}

\title{Finsler-Rellich inequalities involving the distance to the boundary}

\author{G. Barbatis}
\address{Department of Mathematics, National and Kapodistrian University of Athens}
\email{gbarbatis@math.uoa.gr}

\author{M. Paschalis}
\address{Department of Mathematics, National and Kapodistrian University of Athens}
\email{mpaschal@math.uoa.gr}

\subjclass[2010]{Primary 35J30, 35J40; Secondary 35J35, 35J75}



\keywords{Rellich inequalities; higher-order elliptic operators; Finsler metrics}

\begin{abstract}
We study Rellich inequalities associated to higher-order elliptic operators in the Euclidean space. The inequalities are expressed in terms of an associated Finsler metric.
In the case of a half-space we obtain the sharp constant while for a general convex domain we obtain estimates that are better than those obtained by comparison with the polyharmonic operator.
\end{abstract}

\maketitle


\section{Introduction}
In \cite{O}, Owen proves the higher-order Rellich inequality
\begin{equation}
\int_\Omega u(x)(-\Delta)^m u(x) dx \geq A(m) \int_\Omega \frac{u^2(x)}{d^{2m}(x)}dx, \qquad  u\in C^\infty_c(\Omega),
\label{rellich}
\end{equation}
for the polyharmonic operator $(-\Delta)^m$, where $\Omega \subseteq \mathbb{R}^n$ is a convex open set, $d:\Omega \rightarrow \mathbb{R}_+$ is the Euclidean distance from the boundary of $\Omega$ and $A(m)$ is the best constant given explicitly by
$$
A(m) = \frac{(2m-1)^2(2m-3)^2 \cdots 1^2}{4^m}.
$$
This inequality has been subsequently extended and improved in various directions. In \cite{A} and for the case $2m=4$ a simple sufficient condition was given for non-convex domains so that the Rellich inequality
is valid with the sharp constant 9/16; in \cite{BT,B} sharp improvements to (\ref{rellich}) were obtained. We refer to the recent book \cite{BEL} for additional information.

While the literature for Rellich inequalities for the polyharmonic operator $(-\Delta)^m$ is substantial, there are hardly any results on Rellich inequalities involving the distance to the boundary for more general higher-order elliptic operators. This is partly due to the lack of invariance under rotations and to the (related) fact that neither the Euclidean metric nor indeed any other Riemannian metric is suitable for the study of such operators. 

Anisotropic Hardy inequalities with distance to the boundary have recently been obtained in \cite{DBG,MST}. Concerning anisotropic (non-Riemannian) Rellich inequalities, there is a growing literature on inequalities with distance to a point, see e.g. \cite{KY,KR,RSS,RS} and references therein, but we are not aware of any results involving the distance to the boundary. To our knowlegde, the best Rellich constant for $\int |\Delta u|^pdx$ is not known even in the case of a half-space.

The objective of this note is to investigate inequalities of the form
\begin{equation}
\int_\Omega u(x)Hu(x)dx \geq \kappa \int_\Omega \frac{u^2(x)}{d_H^{2m}(x)} dx
\label{kappa}
\end{equation}
where $H$ is a homogeneous elliptic differential operator of order $2m$ with real constant coefficients and
$d_H$ is a suitable Finsler distance to the boundary of $\Omega$ associated to $H$.
In particular, we will prove the following result for half-spaces which is shown to be optimal in an important class of cases.

\begin{theorem}
Let $H$ be a homogeneous elliptic operator of order $2m$ with real constant coefficients and let $\mathbf{H} \subseteq \mathbb{R}^n$ be a half-space. Then the inequality $$\int_{\mathbf{H}} u(x)Hu(x)dx \geq A(m) \int_{\mathbf{H}} \frac{u^2(x)}{d_H^{2m}(x)}dx$$ holds for all $u\in C^\infty_c(\mathbf{H})$.
\label{thm0}
\end{theorem}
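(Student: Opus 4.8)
The plan is to strip off the variables tangent to $\partial\mathbf{H}$ by a partial Fourier transform, reducing the statement to a one-parameter family of one-dimensional Rellich-type inequalities on $(0,\infty)$, each of which is settled by factorising the symbol of $H$ in the normal direction and iterating the sharp weighted one-dimensional Hardy inequality
\begin{equation*}
\int_0^\infty t^{\alpha}|f'(t)|^2\,dt\ \geq\ \Big(\frac{\alpha-1}{2}\Big)^2\int_0^\infty t^{\alpha-2}|f(t)|^2\,dt ,\qquad f\in C^\infty_c(0,\infty),\ \alpha\neq 1 .
\end{equation*}
After an orthogonal change of variables and a translation (under which $H$ remains a homogeneous elliptic operator of order $2m$ with real constant coefficients, $d_H$ transforms covariantly, and both sides of the claimed inequality are unchanged) we may assume $\mathbf{H}=\{x=(x',x_n)\in\R^{n-1}\times\R:x_n>0\}$. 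Write $h$ for the real-valued Fourier symbol of $H$, so $\widehat{Hu}=h\,\hat u$; it is a homogeneous polynomial of degree $2m$, positive on $\R^n\setminus\{0\}$ by ellipticity, and in particular $h(e_n)>0$. On $\mathbf H$ the Finsler distance to the boundary is $d_H(x)=x_n/h(e_n)^{1/(2m)}$, hence $\int_{\mathbf H}u^2 d_H^{-2m}\,dx=h(e_n)\int_{\mathbf H}u^2 x_n^{-2m}\,dx$.

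Taking the Fourier transform $\hat u(\xi',x_n)$ of $u$ in $x'$ only, we get $\widehat{Hu}(\xi',\cdot)=h(\xi',-i\tfrac{d}{dx_n})\hat u(\xi',\cdot)$, and, because $u$ has compact support in $\mathbf H$, for every $\xi'$ the function $w:=\hat u(\xi',\cdot)$ lies in $C^\infty_c(0,\infty)$ with support in a fixed compact subinterval. For real $\xi'$ the polynomial $s\mapsto h(\xi',s)$ has real coefficients and leading coefficient $h(e_n)$, so $h(\xi',-i\tfrac{d}{dt})$ is symmetric on $C^\infty_c(0,\infty)$, and Plancherel's theorem gives $\int_{\mathbf H}uHu\,dx=\int_{\R^{n-1}}\big(\int_0^\infty\overline{w}\,h(\xi',-i\tfrac{d}{dt})w\,dt\big)d\xi'$. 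Thus it suffices to prove
\begin{equation*}
\int_0^\infty\overline{w}\,\big(p(-i\tfrac{d}{dt})w\big)\,dt\ \geq\ h(e_n)\,A(m)\int_0^\infty\frac{|w(t)|^2}{t^{2m}}\,dt,\qquad w\in C^\infty_c(0,\infty),
\end{equation*}
for $p(s)=h(\xi',s)$, and then integrate in $\xi'$ (the case $\xi'=0$, where $p(s)=h(e_n)s^{2m}$, being the classical one-dimensional Rellich inequality).

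For $\xi'\neq 0$ the polynomial $p$ is real, of degree $2m$, with leading coefficient $h(e_n)>0$, and — by ellipticity — is strictly positive on all of $\R$; it therefore factors over $\R$ as $p(s)=h(e_n)\prod_{j=1}^{m}\big((s-\sigma_j)^2+\tau_j^2\big)$ with $\sigma_j\in\R$ and $\tau_j>0$. Hence $p(s)\geq h(e_n)|Q(s)|^2$ pointwise, where $Q(s):=\prod_{j=1}^m(s-\sigma_j)$, so Plancherel yields $\int_0^\infty\overline w\,p(-i\tfrac{d}{dt})w\,dt\geq h(e_n)\,\|Q(-i\tfrac{d}{dt})w\|_{L^2(0,\infty)}^2$. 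Using the identity $(-i\tfrac{d}{dt}-\sigma)=(-i)\,e^{i\sigma t}\tfrac{d}{dt}e^{-i\sigma t}$ on each factor, we may write
\begin{equation*}
Q(-i\tfrac{d}{dt})w=(-i)^m e^{i\sigma_1 t}\tfrac{d}{dt}\Big(e^{i(\sigma_2-\sigma_1)t}\tfrac{d}{dt}\big(\cdots\tfrac{d}{dt}\big(e^{-i\sigma_m t}w\big)\cdots\big)\Big),
\end{equation*}
an $m$-fold composition of $\tfrac{d}{dt}$ interleaved with unimodular multipliers. Since these multipliers do not change pointwise moduli, applying the weighted Hardy inequality above with $\alpha=0,-2,-4,\dots,-2(m-1)$ in succession gives
\begin{equation*}
\big\|Q(-i\tfrac{d}{dt})w\big\|_{L^2(0,\infty)}^2\ \geq\ \prod_{k=0}^{m-1}\Big(\frac{2k+1}{2}\Big)^2\int_0^\infty\frac{|w(t)|^2}{t^{2m}}\,dt\ =\ A(m)\int_0^\infty\frac{|w(t)|^2}{t^{2m}}\,dt ,
\end{equation*}
which completes the proof once we undo the reductions.

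The main obstacle, and the point at which the anisotropy genuinely enters, is this one-dimensional step. Unlike in the polyharmonic case, the lower-order part of $p(-i\tfrac{d}{dt})$ need not be a non-negative operator — typically $p$ contains odd powers of $s$ — so it cannot simply be discarded, and a naive use of the Cauchy--Schwarz inequality to control it loses the constant. Retaining the sharp value $A(m)$ depends on the real factorisation of the positive polynomial $p$ together with the gauge transformations $e^{i\sigma t}$, which absorb the real parts $\sigma_j$ of the roots of $p$ without disturbing the Hardy inequalities.
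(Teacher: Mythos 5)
Your proof is correct, but it takes a genuinely different route from the paper's, and it is worth spelling out the difference. The paper works with the full $n$-dimensional Fourier transform, selects the single optimal direction $\theta$ that attains the minimum defining $d_H$, applies the pointwise symbol bound $H(\xi)\geq (\theta\cdot\xi)^{2m}/F^*(\theta)^{2m}$ (which is nothing more than the defining inequality for $F^*$), and thereby reduces at one stroke to the one-dimensional Rellich inequality along $\theta$. You instead perform a partial Fourier transform in the variables tangent to $\partial\mathbf{H}$, reduce to a one-parameter family of one-dimensional problems indexed by $\xi'$, and settle each one by a factorisation of the positive real polynomial $p(s)=H(\xi',s)$ over $\R$, the gauge identity $(-i\tfrac{d}{dt}-\sigma)=(-i)e^{i\sigma t}\tfrac{d}{dt}e^{-i\sigma t}$ to eliminate the real parts of the roots, and an $m$-fold iteration of the weighted Hardy inequality. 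Both arguments are sharp, but yours is doing strictly more work at the symbol level and as a by-product proves a \emph{stronger} intermediate estimate: you obtain
$\int_{\mathbf H}uHu\,dx\geq A(m)\,H(e_n)\int_{\mathbf H}u^2x_n^{-2m}\,dx$, whereas the paper's method yields $\int_{\mathbf H}uHu\,dx\geq A(m)\,F_H^{**}(e_n)^{2m}\int_{\mathbf H}u^2x_n^{-2m}\,dx$, which is the theorem restated. Since always $F_H^{**}(e_n)\leq F_H(e_n)=H(e_n)^{1/2m}$, with strict inequality possible when $F_H$ is not convex, your constant is at least as large and sometimes larger. The paper's route, on the other hand, is much shorter and makes the link with the Finsler distance transparent.

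One factual correction. You assert that on $\R^n_+$ one has $d_H(x)=x_n/H(e_n)^{1/(2m)}$, but the correct formula (the paper's equation \eqref{eq:6}) is $d_H(x)=x_n/F_H^{**}(e_n)$, and $F_H^{**}(e_n)$ equals $H(e_n)^{1/(2m)}$ only when $F_H$ is convex at $e_n$. In general $F_H^{**}(e_n)\leq H(e_n)^{1/(2m)}$, so $d_H(x)\geq x_n/H(e_n)^{1/(2m)}$. The displayed equality $\int_{\mathbf H}u^2d_H^{-2m}\,dx=H(e_n)\int_{\mathbf H}u^2x_n^{-2m}\,dx$ should therefore read as the one-sided estimate $\int_{\mathbf H}u^2d_H^{-2m}\,dx\leq H(e_n)\int_{\mathbf H}u^2x_n^{-2m}\,dx$; since the inequality goes in the direction you need, your proof of Theorem~\ref{thm0} is unaffected, but the statement of the formula should be corrected. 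Finally, you should note that the reduction to $\R^n_+$ requires checking that the orthogonal change of variables transforms both $H$ and the associated Finsler quantities covariantly; this is routine, but worth a sentence, since the whole point of the theorem is that $H$ is not rotation-invariant.
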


Note that since the operator $H$ is not rotationally invariant, proving the inequality for the
commonly used half-space $\mathbb{R}^n_+ = \{ x\in \mathbb{R}^n : x_n > 0 \}$ does not imply the validity of the inequality for half-spaces in other directions.

In the second part of this note we investigate the case where $\Omega \subseteq \mathbb{R}^n$ is an arbitrary convex domain, and we provide a uniform (independent of the domain) lower bound for the best constant which - although most likely non-optimal - is nonetheless better than what can be achieved by simply comparing with $(-\Delta)^m$.

\section{Preliminaries}

Let $H$ be a homogeneous elliptic differential operator of order $2m$
with real constant coefficients, acting on real-valued functions on $\mathbb{R}^n$. So $H$ has the form
$$
H = (-1)^m \sum_{|\alpha| = 2m} a_\alpha D^\alpha,
$$ where $a_\alpha$ is a constant for each multi-index $\alpha$ and $D^{\alpha}=\partial_{x_1}^{\alpha_1}\ldots \partial_{x_n}^{\alpha_n}$.
The symbol of the operator $H$ is the polynomial $H:\mathbb{R}^n \rightarrow \mathbb{R}$ given by
$$
H(\xi) = \sum_{|\alpha| = 2m} a_\alpha \xi^\alpha.
$$
Setting $F_H(\xi) = H^{1/{2m}}(\xi)$ (which is positively homogeneous of order one in $\xi$), we define the associated Finsler norm $F_H^*:\mathbb{R}^n \rightarrow \mathbb{R}$ by
\begin{equation}
F_H^*(\omega) = \sup_{\xi \neq 0} \frac{\omega \cdot \xi}{F_H(\xi)}= \max_{|\xi|=1} \frac{\omega \cdot \xi}{F_H(\xi)}.
\label{eq:finsler}
\end{equation}
The Finsler distance of two points $x,x'\in \mathbb{R}^n$ is then defined as $F_H^*(x-x')$.
It is well known, see e.g. \cite{EP}, that this is the distance suitable to use when studying properties of $H$, especially so when one seeks sharp constants.
From now on we will suppress the index $H$ when there is no ambiguity and simply write $F$ for $F_H$ and $F^*$ for $F_H^*$. It is clear from the definition that
for any $\omega,\xi \in \mathbb{R}^n$ we have the inequality
\begin{equation}
\label{eq1}
H(\xi) \,  F^*(\omega)^{2m}\geq  ( \omega \cdot \xi)^{2m} .
\end{equation}
Now let $\Omega \subseteq \mathbb{R}^n$ be open with non-empty boundary and let
$d(x)$ denote the Euclidean distance of $x\in\Omega$ to $\partial \Omega$.
The Euclidean distance of a point $x\in \Omega$ to $\partial\Omega$ along the direction
$\omega \in S^{n-1}$ is given by
$$
d_\omega(x) = \inf \{ |s|: x+s\omega \notin \Omega \},
$$
and we have
$$
d(x)= \min_{\omega \in S^{n-1}} d_\omega(x).
$$
In the context of Finsler geometry, distances are scaled by the Finsler norm (\ref{eq:finsler})
along each direction, so the Finsler distance of $x$ from the boundary of $\Omega$ along the direction $\omega$ is given by
$$
d_{H,\omega}(x) = F^*(\omega) d_\omega(x).
$$
Denoting by
\begin{equation}
d_H(x) =\min\{ F^*(x-y) : y\in\partial\Omega\} \; , \quad x\in\Omega ,
\label{eq:y}
\end{equation}
the Finsler distance to the boundary we then have
$$
d_H(x) = \min_{\omega \in S^{n-1}} d_{H,\omega}(x) = \min_{\omega \in S^{n-1}}  \big(F^*(\omega)d_\omega(x) \big).
$$

\section{Finsler-Rellich inequality for half-spaces}

Let $\nu \in S^{n-1}$ and $\mathbf{H}^n_\nu = \{ x\in \mathbb{R}^n : \nu \cdot x > 0 \}$.
The Euclidean distance of $x\in \mathbf{H}^n_\nu$ from $\partial \mathbf{H}^n_\nu$
in the direction of $\omega \in S^{n-1}$ is given by
$$
d_\omega(x) = \frac{\nu \cdot x}{|\nu \cdot \omega|},
$$
and so the Finsler distance to $\partial \mathbf{H}^n_\nu$ is given by
$$
d_H(x) = \min_{\omega \in S^{n-1}} \big(F^*(\omega)d_\omega(x)\big) = \min_{\omega \in S^{n-1}} \bigg( \frac{F^*(\omega)}{|\nu \cdot \omega|} \bigg) \nu \cdot x \, ;
$$
so the minimum is achieved independently of $x$. Letting $\theta \in S^{n-1}$ be a unit vector that achieves the minimum we arrive at
\begin{equation}
d_H(x) = F^*(\theta)d_\theta(x) = \frac{ \nu\cdot x}{F^{**}(\nu)} =\frac{ d(x)}{F^{**}(\nu)} .
\label{eq:6}
\end{equation}
We are now ready to prove Theorem \ref{thm0}. We restate it as follows.
\begin{theorem}
\label{thm1}
Let $H$ be a homogeneous elliptic operator of order $2m$ with real constant coefficients. Then the inequality
\begin{equation}
\int_{\mathbf{H}^n_\nu} u(x)Hu(x)dx \geq A(m) \int_{\mathbf{H}^n_\nu} \frac{u^2(x)}{d_H^{2m}(x)}dx
\label{102}
\end{equation}
holds for any $\nu \in S^{n-1}$ and all $u\in C^{\infty}_c(\mathbf{H}^n_{\nu})$. Moreover, the constant $A(m)$ is optimal in the case where $F_H$ is a convex function.
\end{theorem}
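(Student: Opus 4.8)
The plan is to prove the inequality (\ref{102}) by a Fourier-side reduction combined with the one-dimensional Rellich inequality taken along a carefully chosen direction, and then to establish optimality by an explicit family of test functions. Throughout, fix a unit vector $\theta\in S^{n-1}$ realizing the minimum in $d_H(x)=\min_{\omega\in S^{n-1}}\big(F^*(\omega)d_\omega(x)\big)$; as recorded just before (\ref{eq:6}), this $\theta$ may be chosen independently of $x$, it satisfies $\nu\cdot\theta\neq0$, and $d_H(x)=F^*(\theta)\,d_\theta(x)$ for all $x\in\mathbf{H}^n_\nu$.

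For the inequality itself, extend $u$ by zero to a function in $C^\infty_c(\mathbb{R}^n)$. Plancherel's theorem together with $\widehat{Hu}(\xi)=H(\xi)\hat u(\xi)$ gives $\int_{\mathbf{H}^n_\nu}uHu\,dx=\int_{\mathbb{R}^n}H(\xi)|\hat u(\xi)|^2\,d\xi$, and inserting (\ref{eq1}) with $\omega=\theta$, namely $H(\xi)\ge F^*(\theta)^{-2m}(\theta\cdot\xi)^{2m}$, and then undoing Plancherel (using $\widehat{(\theta\cdot\nabla)^m u}=(i\,\theta\cdot\xi)^m\hat u$) yields
$$
\int_{\mathbf{H}^n_\nu}uHu\,dx\ \ge\ \frac{1}{F^*(\theta)^{2m}}\int_{\mathbb{R}^n}\big|(\theta\cdot\nabla)^m u\big|^2\,dx .
$$
Now foliate $\mathbb{R}^n$ by the lines parallel to $\theta$; since $\nu\cdot\theta\neq0$ this is a linear change of variables with constant Jacobian, each line meets $\mathbf{H}^n_\nu$ in a half-line, on each line $\theta\cdot\nabla$ is the ordinary derivative, the restriction of $u$ is smooth and compactly supported away from the endpoint (on $\mathrm{supp}\,u$ the distance to the endpoint is $\ge d(x)\ge\delta_0>0$), and the distance to that endpoint is precisely $d_\theta$. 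Applying on each line the one-dimensional Rellich inequality $\int_0^\infty(g^{(m)})^2\,dt\ge A(m)\int_0^\infty g^2/t^{2m}\,dt$ — which is (\ref{rellich}) with $\Omega=(0,\infty)$ — and integrating over the family of lines gives $\int_{\mathbb{R}^n}|(\theta\cdot\nabla)^m u|^2\,dx\ge A(m)\int_{\mathbf{H}^n_\nu}u^2/d_\theta^{2m}\,dx$. Combining this with the previous display and using $F^*(\theta)d_\theta(x)=d_H(x)$ produces exactly (\ref{102}).

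For optimality, assume $F=F_H$ is convex, so $F^{**}=F$ and hence, by (\ref{eq:6}), $d_H(x)=(\nu\cdot x)/F(\nu)$. Introduce orthonormal coordinates $(t,y)\in(0,\infty)\times\mathbb{R}^{n-1}$ with $t=\nu\cdot x$ and take $u_R(x)=\chi(y/R)\,w(\nu\cdot x)$, where $\chi\in C^\infty_c(\mathbb{R}^{n-1})$ is a fixed cutoff with $\chi\equiv1$ near the origin and $w\in C^\infty_c(0,\infty)$. Since $D^\alpha[w(\nu\cdot x)]=\nu^\alpha w^{(|\alpha|)}(\nu\cdot x)$, Leibniz' rule gives $Hu_R=(-1)^mH(\nu)\,\chi(y/R)w^{(2m)}(\nu\cdot x)+E_R$, where $E_R$ collects the terms in which at least one derivative falls on $\chi(\cdot/R)$; each such term carries a factor $O(R^{-1})$ and is supported where $|y|\lesssim R$, so $\int_{\mathbf{H}^n_\nu}u_RE_R\,dx=O(R^{n-2})$ while the leading term is of order $R^{n-1}$. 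Integrating the leading term by parts in $t$ and using $H(\nu)=F(\nu)^{2m}$, a direct computation gives
$$
\frac{\displaystyle\int_{\mathbf{H}^n_\nu}u_RHu_R\,dx}{\displaystyle\int_{\mathbf{H}^n_\nu}u_R^2/d_H^{2m}\,dx}\ =\ \frac{\int_0^\infty(w^{(m)})^2\,dt}{\int_0^\infty w^2/t^{2m}\,dt}\,\big(1+O(1/R)\big).
$$
Letting $R\to\infty$ and then choosing $w$ to be a near-extremizer of the one-dimensional Rellich inequality (for instance a logarithmic truncation of $t^{m-1/2}$, whose Rayleigh quotient tends to $A(m)$) shows that $A(m)$ cannot be replaced by any larger constant.

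The main obstacle is the second step: one has to recognize that the correct direction along which to foliate is not the Euclidean normal $\nu$ but the Finsler-minimizing direction $\theta$, and that this is exactly the mechanism by which the \emph{Finsler} distance $d_H=F^*(\theta)d_\theta$ — rather than a Euclidean distance — appears together with the sharp polyharmonic constant $A(m)$. In the optimality part, the convexity hypothesis enters precisely to guarantee $F^{**}=F$, i.e.\ that $d_H$ is an undilated multiple of $\nu\cdot x$; this is what makes the factors $H(\nu)$ and $F(\nu)^{2m}$ cancel in the Rayleigh quotient. Without convexity the same test functions only give the lower bound $A(m)\,F(\nu)^{2m}/F^{**}(\nu)^{2m}\ge A(m)$, which in general is strictly larger than $A(m)$, consistent with the theorem asserting sharpness only in the convex case.
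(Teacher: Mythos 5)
Your proof of the inequality is essentially identical to the paper's: pass to the Fourier side, insert the pointwise bound $H(\xi)\ge F^*(\theta)^{-2m}(\theta\cdot\xi)^{2m}$, return to physical space, and apply the one-dimensional Rellich inequality along the Finsler-minimizing direction $\theta$ (which by definition gives $F^*(\theta)d_\theta=d_H$). Your extra remarks about foliating by lines parallel to $\theta$ simply spell out what the paper leaves implicit when it writes $\int(\partial_\theta^m u)^2\,dx\ge A(m)\int u^2/d_\theta^{2m}\,dx$, and the observation that $\nu\cdot\theta\neq0$ (so the foliation is transverse) is a worthwhile sanity check. For optimality the idea is again the same — test functions of product form $\text{cutoff}(y)\times w(\nu\cdot x)$, with the Leibniz cross-terms controlled and the cancellation $H(\nu)=F(\nu)^{2m}=F^{**}(\nu)^{2m}$ under convexity — but the scaling is arranged differently: the paper fixes the transverse cutoff and lets a one-parameter family $g_\epsilon(t)=t^{(2m-1)/2+\epsilon}$ blow up as $\epsilon\to0$, so that the error terms are $O(1)$ against divergent main terms; you instead fix $w\in C^\infty_c(0,\infty)$, dilate the transverse cutoff by $R$ so the error is $O(R^{n-2})$ against an $O(R^{n-1})$ main term, and only afterwards optimize over $w$. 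Both decouplings of the two limits are legitimate and give the same conclusion; yours has the minor advantage of never leaving $C^\infty_c$, while the paper's has the minor advantage of not requiring the $R$-scaling bookkeeping.
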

\begin{proof}
Let $\hat{u}(\xi)$, $\xi\in \mathbb{R}^n$, denote the Fourier transform of $u$. Recalling  (\ref{eq1}),
applying Plancherel's theorem and using the one-dimensional Rellich inequality we obtain
\begin{eqnarray*}
\int_{\mathbf{H}^n_\nu} u(x)Hu(x)dx &=& \int_{\mathbb{R}^n} H(\xi) |\hat{u}(\xi)|^2 d\xi  \\
&\geq&  \frac{1}{F^*(\theta)^{2m}} \int_{\mathbb{R}^n} (\theta \cdot \xi)^{2m} |\hat{u}(\xi)|^2 d\xi \\
&=& \frac{1}{F^*(\theta)^{2m}} \int_{\mathbf{H}^n_\nu}  (\partial_\theta^{m} u(x))^2 dx \\
& \geq&  \frac{A(m)}{F^*(\theta)^{2m}} \int_{\mathbf{H}^n_\nu} \frac{u^2(x)}{d_\theta^{2m}(x)} dx \\
& =& A(m) \int_{\mathbf{H}^n_\nu} \frac{u^2(x)}{d_H^{2m}(x)}dx.
\end{eqnarray*}
To prove the optimality, we proceed as follows. For $\epsilon>0$ we consider the function $g_{\epsilon}(t) =t^{ \frac{2m-1}{2} +\epsilon}$, $t>0$. This is a sequence of minimizers for the one-dimensional
Rellich inequality of order $m$, that is
\begin{equation}
\label{eq:A}
\frac{ \Int_0^1  (g_{\epsilon}^{(m)}(t))^2 dt}{ \Int_0^1  \frac{g_{\epsilon}^2}{t^{2m}} dt}  \longrightarrow A(m) \, , \qquad \mbox{ as }\epsilon\to 0+ \, .
\end{equation}
Let $v_{\epsilon}(x)=g_{\epsilon}( x\cdot \nu)$. For any multiindex $\alpha$ with $|\alpha|=2m$ we then have
$D^{\alpha}v_{\epsilon}(x)=\nu^{\alpha} g_{\epsilon}^{(2m)}( x\cdot \nu)$ and therefore
\begin{equation}
Hv_{\epsilon}(x)=(-1)^m H(\nu) g_{\epsilon}^{(2m)}( x\cdot \nu)
\label{eq:5}
\end{equation}
We next localize $v_{\epsilon}$. We consider a function $\psi\in C^{\infty}_c(\R)$ such that $0\leq \psi\leq 1$, $\psi(t)=1$, if $|t|\leq 1/2$, $\psi(t)=0$, if $|t|\geq 1$.
Let $\pi_\nu: \mathbf{H}^n_\nu \rightarrow \partial \mathbf{H}^n_\nu $ denote the orthogonal projection from the half-space to its boundary. We define
\[
\phi(x) =\psi(\nu\cdot x)\psi (\pi_\nu(x)) \; , \qquad  u_{\epsilon}(x) =\phi(x)v_{\epsilon}( x).
\]
Then $u_{\epsilon}\in H^m_0(\mathbf{H}^n_\nu)$ and $\|u_{\epsilon}\|_{H^m_0(\mathbf{H}^n_\nu)} \to +\infty$ as $\epsilon\to 0+$. We shall estimate
$\int_{\mathbf{H}^n_\nu} u_{\epsilon} \, Hu_{\epsilon} dx$ and for this we note that when we use Leibniz rule to expand $Hu_{\epsilon} =H(\phi v_{\epsilon} )$ any term containing at least one derivative of
$\phi$ stays bounded as $\epsilon\to 0$. Setting $k=\int_{\R^{n-1}} \psi(|y|)^2dy$ and applying (\ref{eq:5}) we thus have
\begin{eqnarray}
\int_{\mathbf{H}^n_\nu} u_{\epsilon} \, Hu_{\epsilon} dx &=&  \int_{\mathbf{H}^n_\nu} \phi^2 v_{\epsilon} \, Hv_{\epsilon}dx  +O(1) \nonumber \\
&=&  k(-1)^m  H(\nu) \int_0^1  \psi^2 g_{\epsilon} \,  g^{(2m)}_{\epsilon}dt  +O(1) \nonumber \\
&=& kH(\nu) \int_0^1 (g^{(m)}_{\epsilon})^2dt  +O(1)  . \label{100}
\end{eqnarray}
On the other hand, recalling also (\ref{eq:6}) we similarly have
\begin{eqnarray}
\int_{\mathbf{H}^n_\nu} \frac{u_{\epsilon}^2(x)}{ d_H^{2m}(x)}dx &=&  F^{**}(\nu)^{2m} \int_{\mathbf{H}^n_\nu } \frac{\phi^2 v_\epsilon^2}{d^{2m}} dx  \nonumber \\
&=&  k F^{**}(\nu)^{2m}   \int_0^1 \frac{g_\epsilon^2}{t^{2m}} dt +O(1) .  \label{101}
\end{eqnarray}
From (\ref{100}), (\ref{101}) and (\ref{eq:A}) we conclude that
\begin{eqnarray*}
\frac{\Int_{\mathbf{H}^n_\nu} u_{\epsilon}(x)Hu_{\epsilon}(x)dx }{\Int_{\mathbf{H}^n_\nu} \frac{u_{\epsilon}^2(x)}{ d_H^{2m}(x)}dx}  
&=&  \bigg(\frac{F(\nu)}{F^{**} (\nu) } \bigg)^{2m}  \frac{\Int_0^1 (g^{(m)}_{\epsilon}(t))^2dt +O(1) }{\Int_0^1 \frac{g_\epsilon^2(t)}{t^{2m}} dt +O(1) } \\
&\to &  \bigg(\frac{F(\nu)}{F^{**} (\nu) } \bigg)^{2m} A(m) , \qquad \mbox{ as }\epsilon\to 0+.
\end{eqnarray*}
Since $F$ is convex, $F=F^{**}$, and optimality follows.
\end{proof}
\noindent
{\bf Remark.} It is known \cite[Section 1.6]{S} that the set $\{ \xi\in\R^n : F^{**}(\xi) \leq 1\}$ is the convex hull of the set $\{ \xi\in\R^n : F(\xi) \leq 1\}$. This shows that
$ F^{**}(\xi)\leq F(\xi)$ for all $\xi\in\R^n$ and also that there exists
a direction $\nu\in S^{n-1}$ such that $ F^{**}(\nu)= F(\nu)$. It follows
in particular that if $F$ is not convex the constant $A(m)$ is still the best possible constant for which (\ref{102}) is valid for all
$\nu\in S^{n-1}$ and all $u\in C^\infty_c(\mathbf{H}^n_{\nu})$.

\section{Convex domains}

If the symbol $H(\xi)$ of the operator $H$ satisfies
\[
\lambda |\xi|^{2m} \leq H(\xi) \leq \Lambda |\xi|^{2m}, \qquad \xi\in\R^n,
\]
then applying the polyharmonic Rellich inequality (\ref{rellich}) we obtain that
for any convex domain $\Omega\subset\R^n$ there holds
\begin{equation}
\int_{\Omega} u(x)Hu(x)dx \geq A(m) \frac{\lambda}{\Lambda} \int_{\Omega} \frac{u^2(x)}{d_H^{2m}(x)}dx  \, , \quad u\in C^{\infty}_c(\Omega).
\label{stn}
\end{equation}
In this section we adapt Davies' well known mean distance function method \cite{D}
to establish an alternative lower bound for the best Rellich constant of
(\ref{stn}).
While we have not obtained the actual constant $A(m)$, we nevertheless provide a constant which depends only on the symbol and which can be easily computed numerically in any particular case. This has been carried out at the end of the section for two monoparametric families of operators and it turns out that the constants obtained are better than those given by (\ref{stn}).

To state our result, we need some additional definitions related to the operator in question. Assuming that $H$ is an elliptic differential operator of order $2m$ as above and denoting
by $d\sigma(\omega)$ the normalized surface measure on $S^{n-1}$, we define the positive constants $\mu_H$ and $M_H$ as the best constants for the inequalities
\[
\mu_H  \,  F^{**}_H(\xi)^{2m} \leq \int_{S^{n-1}} \frac{ (\xi\cdot \omega)^{2m}}{ F^*(\omega)^{2m}}d\sigma(\omega) \leq M_H  \, H(\xi) \, , \qquad \xi\in\R^n  .
\]
With this settled, we have the following.
\begin{theorem}
\label{thm2}
Let $H$ be a homogeneous elliptic operator of order $2m$ with real constant coefficients.
Then for any open convex set $\Omega \subseteq \mathbb{R}^n$ the inequality
\begin{equation}
\int_{\Omega} u(x)Hu(x)dx \geq A(m) \frac{\mu_H}{M_H} \int_{\Omega} \frac{u^2(x)}{d_H^{2m}(x)}dx
\label{rel10}
\end{equation}
holds for all $u\in C^\infty_c(\Omega)$.
\end{theorem}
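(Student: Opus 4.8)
The plan is to adapt Davies' mean distance method: one reduces the quadratic form $\int_\Omega uHu\,dx$ to one-dimensional Rellich inequalities taken along each direction $\omega\in S^{n-1}$, averages these over $\omega$, and then uses convexity of $\Omega$ to turn the resulting ``mean distance'' weight into a multiple of $d_H^{-2m}$. The one-dimensional input is Owen's inequality (\ref{rellich}) itself, applied in dimension one: for an interval $I\subseteq\R$ (a convex open set) and $f\in C^\infty_c(I)$, integration by parts gives $\int_I (f^{(m)})^2\,ds\ge A(m)\int_I f^2/\delta^{2m}\,ds$, where $\delta(s)$ is the distance of $s$ to the endpoints of $I$ (on half-lines this reads $\delta(s)=s$; on line sections contained in $\Omega$, treated separately below, the right-hand side is zero). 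Fixing $\omega\in S^{n-1}$ and slicing $\Omega$ into lines parallel to $\omega$, one notes that along the line through a point $x$ the distance to the endpoints of the section is exactly $d_\omega(x)$, so Fubini yields the directional inequality
$$\int_\Omega (\partial_\omega^m u)^2\,dx\ \ge\ A(m)\int_\Omega \frac{u^2(x)}{d_\omega(x)^{2m}}\,dx,\qquad\omega\in S^{n-1}.$$

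Next I would bring in the symbol of $H$. By Plancherel $\int_\Omega uHu\,dx=\int_{\R^n}H(\xi)|\hat{u}(\xi)|^2\,d\xi$, and the definition of $M_H$ gives $H(\xi)\ge M_H^{-1}\int_{S^{n-1}}F^*(\omega)^{-2m}(\xi\cdot\omega)^{2m}\,d\sigma(\omega)$. Substituting this, interchanging the order of integration, using that $\int_{\R^n}(\xi\cdot\omega)^{2m}|\hat{u}|^2\,d\xi=\int_\Omega(\partial_\omega^m u)^2\,dx$ (Plancherel again), then applying the directional inequality and Fubini once more, I obtain
$$\int_\Omega uHu\,dx\ \ge\ \frac{A(m)}{M_H}\int_\Omega u^2(x)\Bigl(\int_{S^{n-1}}\frac{d\sigma(\omega)}{F^*(\omega)^{2m}\,d_\omega(x)^{2m}}\Bigr)dx.$$
Thus the theorem follows once one establishes the pointwise estimate
$$\int_{S^{n-1}}\frac{d\sigma(\omega)}{F^*(\omega)^{2m}\,d_\omega(x)^{2m}}\ \ge\ \frac{\mu_H}{d_H(x)^{2m}},\qquad x\in\Omega,$$
which is the crux of the argument.

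To prove it, fix $x\in\Omega$, let $\bar y\in\partial\Omega$ realize the minimum in (\ref{eq:y}), so $F^*(x-\bar y)=d_H(x)$, and pick a supporting hyperplane of $\Omega$ at $\bar y$ with unit normal $\nu$, so that $\Omega\subseteq\mathbf{H}_\nu:=\{y:\nu\cdot(y-\bar y)<0\}$ and $\nu\cdot\bar y-\nu\cdot x>0$. Starting from $x$ and moving along $\pm\omega$ one leaves $\mathbf{H}_\nu$, hence $\Omega$, after Euclidean length at most $(\nu\cdot\bar y-\nu\cdot x)/|\nu\cdot\omega|$, so $d_\omega(x)^{-2m}\ge(\nu\cdot\omega)^{2m}/(\nu\cdot\bar y-\nu\cdot x)^{2m}$ for every $\omega\in S^{n-1}$ (trivially when $\nu\cdot\omega=0$). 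The key geometric observation is that $\nu\cdot\bar y-\nu\cdot x=F^{**}(\nu)\,d_H(x)$: the open Finsler ball $\{z:F^*(z-x)<d_H(x)\}$ lies in $\Omega$, hence in $\mathbf{H}_\nu$, while its closure contains $\bar y\in\partial\mathbf{H}_\nu$; therefore the Finsler distance from $x$ to $\partial\mathbf{H}_\nu$ equals $d_H(x)$, and comparison with the half-space formula (\ref{eq:6}) (valid since $H$, and hence $F$, $F^*$, $F^{**}$, is even) gives the identity. Integrating the pointwise bound and invoking the definition of $\mu_H$ with $\xi=\nu$,
$$\int_{S^{n-1}}\frac{d\sigma(\omega)}{F^*(\omega)^{2m}d_\omega(x)^{2m}}\ \ge\ \frac{1}{(\nu\cdot\bar y-\nu\cdot x)^{2m}}\int_{S^{n-1}}\frac{(\nu\cdot\omega)^{2m}}{F^*(\omega)^{2m}}\,d\sigma(\omega)\ \ge\ \frac{\mu_H\,F^{**}(\nu)^{2m}}{(\nu\cdot\bar y-\nu\cdot x)^{2m}}\ =\ \frac{\mu_H}{d_H(x)^{2m}},$$
which is the desired estimate; combined with the previous display it proves (\ref{rel10}).

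The main obstacle is precisely this pointwise geometric estimate — above all the identity $\nu\cdot\bar y-\nu\cdot x=F^{**}(\nu)\,d_H(x)$, which works only because $\nu$ is chosen as the normal of a supporting hyperplane at the \emph{Finsler}-nearest boundary point: the Finsler ball inscribed in $\Omega$ and touching that hyperplane forces the supporting half-space to have the same Finsler distance to its boundary as $\Omega$ has at $x$. Everything else — the Plancherel identities, the interchanges of integration, and the one-dimensional Rellich inequality — is routine, and one should also dispose of the (trivial) lines entirely contained in $\Omega$, on which the one-dimensional right-hand side vanishes and no boundary is met.
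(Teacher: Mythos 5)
Your proof is correct and follows the paper's argument closely: Plancherel, the $M_H$ averaging bound, the directional one-dimensional Rellich inequality, and the supporting-hyperplane estimate combined with $\mu_H$. The one place you depart from the paper is the final geometric substep: the paper simply applies the duality inequality $N\cdot(y-x)\le F^{**}(N)\,F^{*}(y-x)$ (immediate from the definition of $F^{**}$ as the dual of $F^{*}$) to get $\mu_H\bigl(F^{**}(N)/((y-x)\cdot N)\bigr)^{2m}\ge\mu_H/F^{*}(y-x)^{2m}$, whereas you prove the sharper \emph{identity} $\nu\cdot\bar y-\nu\cdot x=F^{**}(\nu)\,d_H(x)$ via the inscribed Finsler ball and the half-space formula (\ref{eq:6}); this is a nice observation (it shows the paper's last inequality is actually an equality at the Finsler-nearest boundary point), but it is a bit more elaborate than the plain duality inequality, which already suffices.
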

\begin{proof}
We have
\begin{eqnarray*}
\int_\Omega u(x)Hu(x)dx &=& \int_{\mathbb{R}^n} H(\xi) |\hat{u}(\xi)|^2 d\xi  \\
&\geq & \frac{1}{M_H} \int_{S^{n-1}} \frac{1}{F^*(\omega)^{2m}} \int_{\mathbb{R}^n} (\omega \cdot \xi)^{2m} |\hat{u}(\xi)|^2 d\xi \, d\sigma(\omega) \\
&=& \frac{1}{M_H} \int_{S^{n-1}} \frac{1}{F^*(\omega)^{2m}} \int_\Omega (\partial^m_\omega u(x))^2 dx \, d\sigma(\omega).
\end{eqnarray*}
We now apply the one-dimensional Rellich inequality in the direction $\omega$ to get
\begin{equation}
\int_\Omega u(x)Hu(x)dx \geq
A(m)\frac{1}{M_H} \int_\Omega u^2(x) \int_{S^{n-1}} \frac{1}{(F^*(\omega)d_\omega(x))^{2m}} d\sigma(\omega) dx.
\label{eq:est}
\end{equation}
To estimate the last integral we consider a point $x\in\Omega$ and a point $y = y(x) \in \partial \Omega$ that realizes the infimum in (\ref{eq:y}).
Let $\Pi_x$ be a supporting hyperplane at $y(x)$ and let $N=N(x)$ be the outward normal unit vector to $\Pi_x$. 
We denote by $z(\omega)=z(\omega,x)$ the intersection of $\Pi_x$
with the line $\{x+t\omega: t\in \mathbb{R}\}$. From the previous discussion, it follows that $|z(\omega)-x| \geq d_\omega(x)$ and therefore
$F^*(z(\omega)-x) \geq F^*(\omega)d_\omega(x)$ for all $x\in\Omega$ and $\omega\in S^{n-1}$.

Let $s\in\R$ be such that $z(\omega)=x+s\omega$. Since $z(\omega)$ and $y$
both belong to $\Pi_x$, $z(\omega)-y$ is perpendicular to $N$, that is
$$
(x+s\omega-y) \cdot N = 0.
$$
It follows that
$$
s=\frac{(y-x)\cdot N}{\omega \cdot N},
$$
and so
$$
z(\omega)=x+\frac{(y-x)\cdot N}{\omega \cdot N} \omega.
$$
Returning to (\ref{eq:est}), we now have
\begin{eqnarray*}
\int_{S^{n-1}} \frac{1}{(F^*(\omega)d_\omega(x))^{2m}} d\sigma(\omega) &\geq &\int_{S^{n-1}} \frac{1}{F^*(z(\omega)-x)^{2m}} d\sigma(\omega) \\
&=&\frac{1}{((y-x)\cdot N)^{2m}} \int_{S^{n-1}} \bigg( \frac{\omega \cdot N}{F^*(\omega)} \bigg)^{2m} d\sigma(\omega) \\
&\geq & \mu_H \bigg( \frac{F^{**}(N)}{(y-x)\cdot N} \bigg)^{2m} \\
&\geq& \frac{\mu_H}{F^*(y-x)^{2m}} = \frac{\mu_H}{d_H^{2m}(x)},
\end{eqnarray*}
and the proof is complete.
\end{proof}
We think of estimate (\ref{rel10}) as an explicit estimate in the sense that $\mu_H$ and $M_H$ can be computed numerically in any specific case.
In the next two examples we illustrate the estimate of Theorem \ref{thm2} and in particular
show that inequality (\ref{rel10}) is better than the one obtained from (\ref{stn}).

\

\noindent
{\bf Example 1.} Let $\beta>-1$ (for ellipticity) and
\[
H_{\beta}(\xi) =\xi_1^4 +2\beta  \xi_1^2\xi_2^2 + \xi_2^4  , \qquad \xi\in\R^2.
\]
We have
\[
\left\{ 
\begin{array}{ll}
  \frac{\beta +1}{2} |\xi|^4  \leq H_{\beta}(\xi)  \leq  |\xi|^4 , &  \mbox{ if } -1<\beta\leq 1, \\[0.2cm]
   |\xi|^4  \leq H_{\beta}(\xi)  \leq  \frac{\beta +1}{2} |\xi|^4 , & \mbox{ if } \beta\geq 1, 
\end{array}
\right.
\]
hence (\ref{stn}) gives
\[
\int_{\Omega} u(x)H_{\beta}u(x)dx \geq \frac{9}{16}c(\beta) \int_{\Omega} \frac{u^2(x)}{d_{H_{\beta}}^{2m}(x)}dx \; , \qquad u\in C^{\infty}_c(\Omega),
\]
where
\[
c(\beta)=\left\{  
\begin{array}{ll}
\frac{\beta +1}{2} , &  \mbox{ if } -1<\beta\leq 1, \\[0.1cm]
 \frac{2}{\beta +1} , & \mbox{ if } \beta\geq 1 \, .
\end{array}
\right.
\]
In Figure 1 below we have plotted the function $s(\beta) =\mu_{H_{\beta}} / M_{H_{\beta}} $ (blue line) against $c(\beta)$ (red line) and it is seen that the estimate of
Theorem \ref{thm2} is better than the one obtained from (\ref{stn}).

\

\noindent
{\bf Example 2.} Let 
\[
\hat{H}_{\beta}(\xi) =\xi_1^6 +\beta  \xi_1^4\xi_2^2+ \beta  \xi_1^2\xi_2^4 + \xi_2^6  , \qquad \xi\in\R^2.
\]
We have $\hat{H}_{\beta}(\xi)  =(\xi_1^2+\xi_2^2) [ (\xi_1^2-\xi_2^2)^2 +(\beta+1)\xi_1^2\xi_2^2]$, so we assume  $\beta>-1$ for ellipticity. We now have
\[
\left\{ 
\begin{array}{ll}
  \frac{\beta +1}{4} |\xi|^4  \leq \hat{H}_{\beta}(\xi)  \leq  |\xi|^4 , &  \mbox{ if } -1<\beta\leq 3, \\[0.2cm]
   |\xi|^4  \leq \hat{H}_{\beta}(\xi)  \leq  \frac{4}{\beta +1}|\xi|^4 , & \mbox{ if } \beta\geq 3, 
\end{array}
\right.
\]
hence (\ref{stn}) gives
\[
\int_{\Omega} u(x)\hat{H}_{\beta}u(x)dx \geq \frac{9}{16}\hat{c}(\beta) \int_{\Omega} \frac{u^2(x)}{d_{\hat{H}_{\beta}}^{2m}(x)}dx \; , \qquad u\in C^{\infty}_c(\Omega),
\]
where
\[
\hat{c}(\beta)=\left\{  
\begin{array}{ll}
\frac{\beta +1}{4} , &  \mbox{ if } -1<\beta\leq 3, \\[0.1cm]
 \frac{4}{\beta +1} , & \mbox{ if } \beta\geq 3 \, .
\end{array}
\right.
\]
In Figure 2 below we have plotted the function $\hat{s}(\beta) =\mu_{\hat{H}_{\beta}} / M_{\hat{H}_{\beta}} $ (blue line) against $\hat{c}(\beta)$ (red line).

\begin{figure*}[ht]
\centering
\captionsetup{justification=centering,margin=0.2cm}
\begin{minipage}{.5\textwidth}
  \centering
  \includegraphics[width=1\linewidth]{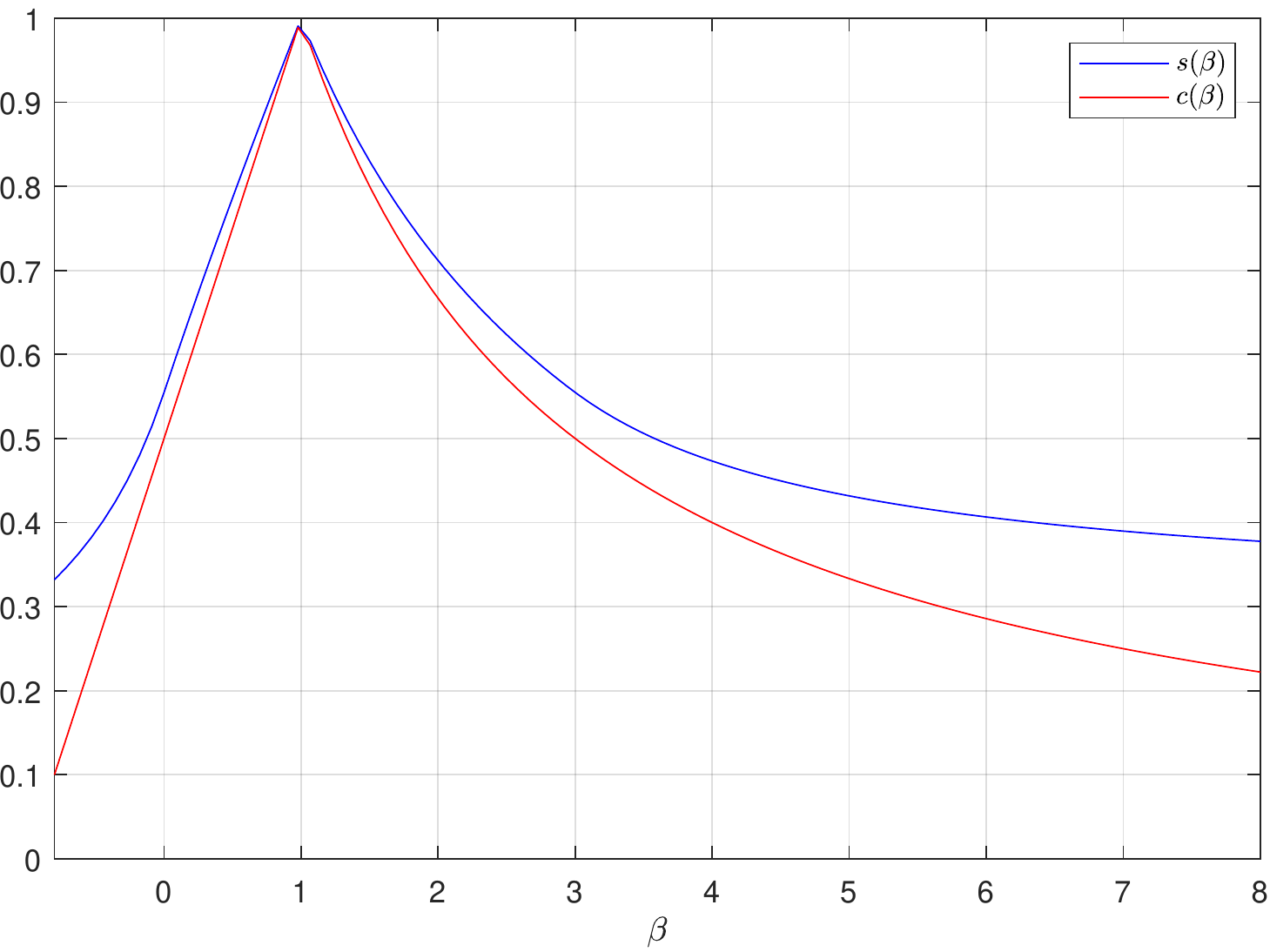}
\caption{\small Plots of $s(\beta)$ and $c(\beta)$}
\end{minipage}%
\begin{minipage}{.5\textwidth}
  \centering
 \includegraphics[width=1\linewidth]{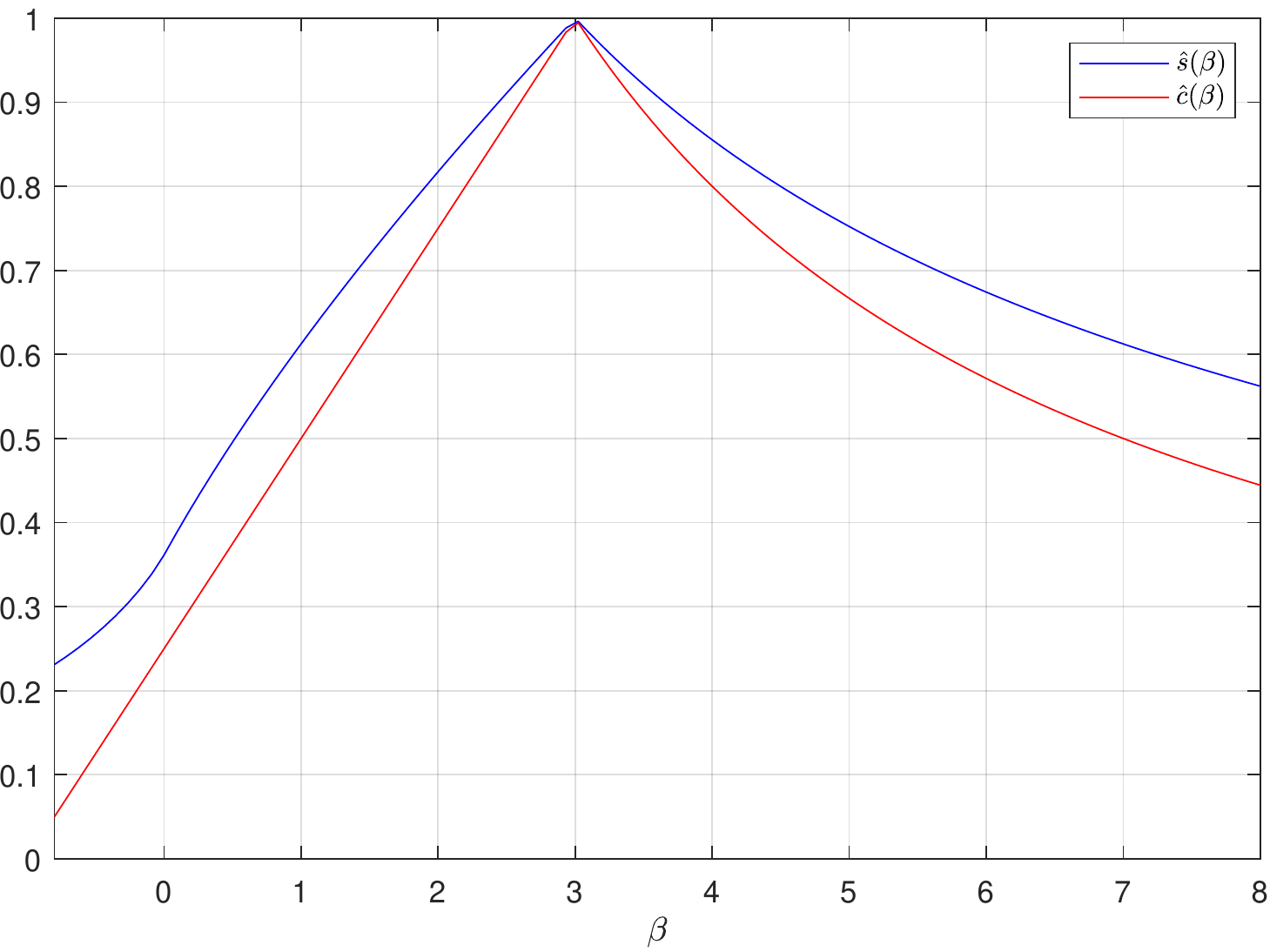}
\caption{ \small Plots of $\hat{s}(\beta)$ and $\hat{c}(\beta)$}
\end{minipage}
\end{figure*}

\noindent
{\bf Remark.} Considering Example 1, it is not difficult to prove that for any $\beta>-1$ there holds
\[
 \frac{1}{4} \max\{ 1 , \frac{2}{\beta +1} \} |\xi|^4 \leq   F^*( \xi)^4   \leq   
 \max\{ 1 , \frac{2}{\beta +1} \} |\xi|^4  \; , \quad \xi \in\R^2 \, .
\]
It then follows easily that $s(\beta)\geq 3/32$ for all $\beta>-1$. Hence not only is $s(\beta)$ larger than $c(\beta)$ but we also have that  $s(\beta)/c(\beta) \to +\infty$
as $\beta\to -1$ or $\beta\to +\infty$. Similarly, in Example 2 one has
\[
 \frac{1}{8} \max\{ 1 , \frac{2}{\beta +1} \} |\xi|^6 \leq   F^*( \xi)^6   \leq   
 \max\{ 1 , \frac{2}{\beta +1} \} |\xi|^6  \; , \quad \xi \in\R^2 \, .
\]
It follows that $\hat{s}(\beta)\geq 5/128$ and therefore  $\hat{s}(\beta)/\hat{c}(\beta) \to +\infty$
as $\beta\to -1$ or $\beta\to +\infty$.

\

\noindent
{\bf Acknowledgement.} We thank G. Kounadis for his help with the Matlab diagrams. The research of MP was supported by the Hellenic Foundation for Research and Innovation (HFRI) under the HFRI PhD Fellowship grant (Fellowship Number 1250).

\bibliographystyle{amsplain}

\end{document}